\documentclass[11pt,reqno]{amsart}
\usepackage[margin=1in]{geometry}
\usepackage[fleqn,tbtags]{mathtools}
\usepackage[shortlabels]{enumitem}
\usepackage{mlmodern}

\usepackage{hyperref,xcolor}
\hypersetup{
    colorlinks,
    linkcolor={red!50!black},
    citecolor={blue!50!black},
    urlcolor={blue!80!black}
}

\usepackage{contour}
\usepackage[normalem]{ulem}

\contourlength{0.8pt}

\newcommand{\myuline}[1]{%
  \uline{\phantom{#1}}%
  \llap{\contour{white}{#1}}%
}

\newcommand{\tp}{{\scriptscriptstyle\mathsf{T}}}
\newcommand{\R}{\mathbb{R}}
\newcommand{\C}{\mathbb{C}}

\let\O\undefine
\DeclareMathOperator{\tr}{tr}
\DeclareMathOperator{\Gr}{Gr}
\DeclareMathOperator{\V}{V}
\DeclareMathOperator{\O}{O}
\DeclareMathOperator{\SO}{SO}
\DeclareMathOperator{\Spin}{Spin}
\DeclareMathOperator{\Sph}{S}
\DeclareMathOperator{\T}{\mathbb{T}}

\theoremstyle{definition}
\newtheorem{theorem}{Theorem}[section]

\newtheorem{lemma}[theorem]{Lemma}

\newtheorem{proposition}[theorem]{Proposition}

\newtheorem{problem}{Problem}
\numberwithin{equation}{section}

\setlist[description]{font=\normalfont\itshape,labelindent=2em}

\begin{document}

\title{No closed-form formula for Stiefel geodesics}

\author{Rongbiao Thomas Wang}
\address{Computational and Applied Mathematics, University of Chicago, Chicago, IL 60637}
\email{rbwang@uchicago.edu}

\author{Lek-Heng Lim}
\address{Computational and Applied Mathematics Initiative, University of Chicago, Chicago, IL 60637}
\email{lekheng@uchicago.edu}

\begin{abstract}
We use differential Galois theory to prove that given two arbitrary points $X$ and $Y$ on the Stiefel manifold, there is no closed-form analytic expression for the Riemannian logarithm $\log_X Y$, the geodesic between $X$ and $Y$, or the geodesic distance between $X$ and $Y$. In other words, these cannot be expressed in terms of elementary functions of the coordinates of $X$ and $Y$; in fact, our result goes further --- even if we allow for repeated integrals of elementary functions, there is still no such expression. 
\end{abstract}

\subjclass{53C22, 
12H05, 
90C48} 

\keywords{geodesics, Stiefel manifold, differential Galois theory, manifold optimization, Riemannian optimization}

\maketitle

\section{Introduction}\label{sec:intro}

The Stiefel manifold \cite{Stiefel} is the commonest manifold appearing in Riemannian optimization \cite{EAS1999}, not least because of its newfound use in the manifold Muon optimizer \cite{muon1, muon2}, a manifold variant of the popular Muon optimizer \cite{muon3}, for training deep neural networks and transformers. There has been a long and continued search for a closed-form formula to express a geodesic with given initial point $X$ and endpoint $Y$ on the Stiefel manifold, or, equivalently, a closed-form formula for the Riemannian logarithm $\log_X Y$.
The effort has persisted over at least  the last $13$ years \cite{geo7,geo1,geo2,geo8,geo9,geo3,geo10,geo4,geo5,geo6}  (references too numerous to be exhaustive),  likely making it the most sought after open problem in manifold optimization. In this article, we show that no such formula exists.

For positive integers $k \le n$, the Stiefel manifold 
\begin{equation}\label{eq:stief}
\V(k,n) = \{X \in \R^{n \times k}: X^\tp X = I_k\}
\end{equation}
is a Riemannian manifold equipped with the natural $\O(n) \times \O(k)$-invariant metric 
\begin{equation}\label{eq:metric}
    g_X(H_1,H_2) = \tr\Bigl( H_1^\tp \Bigl(I_n - \frac{1}{2}XX^\tp\Bigr) H_2 \Bigr).
\end{equation}
For a geodesic on $\V(k,n)$, one may ask for two different kinds of closed-form formulas with different input data:
\begin{problem}\label{prob:XH}
Given a point $X \in \V(k,n)$ and a tangent vector $H \in \T_X \V(k,n)$, find a closed-form formula for the geodesic $\gamma: [0,1] \to \V(k,n)$ with $\gamma(0) = X$ and $\gamma'(0) = H$.
\end{problem}
\begin{problem}\label{prob:XY}
Given two points $X$ and $Y \in \V(k,n)$, find a closed-form formula for the geodesic $\gamma: [0,1] \to \V(k,n)$ with $\gamma(0) = X$ and $\gamma(1) = Y$.
\end{problem}
For the uninitiated, the term ``closed-form formula’’ has a precise, universally accepted definition in mathematics \cite{Risch}. It means an expression in terms of arithmetic operations over the complex field, arbitrary algebraic functions (which include surds), scalar exponentials, and scalar logarithms; other functions, including trigonometric and hyerbolic functions and their inverses, may be obtained from these. A function admitting such an expression is called an \emph{elementary function} \cite{Ritt}, i.e., a closed-form formula is an elementary function.

What is perhaps not universally known is that standard matrix functions, including the matrix exponential, logarithm, sign, $p$th root, sine, and cosine \cite{Higham2008}, are all elementary functions. In fact, this observation does not seem to have been stated anywhere in existing literature so we will explain it here:  The reason is that for any $X \in \C^{n \times n}$ and complex-valued univariate function $\varphi$ analytic near the spectrum of $X$, the Lagrange--Hermite formula \cite[Equation~1.8]{Higham2008} yields
\[
\varphi(X)
=\sum_{i=1}^{r}
\Biggl(
\prod_{\substack{j=1\\j\neq i}}^{r}
(X-\lambda_j I)^{m_j}
\Biggr)\cdot
\sum_{k=0}^{m_i-1}
\frac{(X-\lambda_i I)^k}{k!}
\frac{d^k}{dz^k}
\Biggl(
\frac{\varphi(z)}
{\prod_{\substack{j=1\\j\neq i}}^{r}
(z-\lambda_j)^{m_j}}
\Biggr)
\Bigg\vert_{z=\lambda_i},
\]
where $\lambda_1,\dots,\lambda_r$ are the distinct eigenvalues of $X$ and $m_i$ is the size of the largest $\lambda_i$-Jordan block, $i =1,\dots,r$. Since $\lambda_i$ is an algebraic function in the entries of $X$ and derivatives of
elementary scalar functions are elementary, the matrix function $\varphi(X)$ is elementary whenever the scalar function $\varphi(x)$ is elementary.

Problem~\ref{prob:XH} has a well-known closed-form formula given by
\begin{equation}\label{eq:geodesic}
    \gamma(t) = \exp\bigl(t(HX^\tp - XH^\tp - X(X^\tp H)X^\tp) \bigr)X
\end{equation}
where $\exp$ is the matrix exponential function. 
The main result of this article is that Problem~\ref{prob:XY} has no solution in general, allowing us to deduce that there is also no closed-form formula for the Riemannian logarithm $\log_X Y$ or the geodesic distance between $X$ and $Y$. Indeed, we will prove a stronger result showing that the Riemannian logarithm, the geodesic, and the geodesic distance, cannot be expressed as \emph{Liouvillian functions}, which allows for the operation of taking antiderivatives in addition to those for elementary functions. For example, the error function or Fresnel integrals are Liouvillian but not elementary.

\begin{theorem}\label{thm:imposs}
Let $n\ge4$ and $2\le k\le n-2$. For arbitrary $X,Y\in\V(k,n)$ and $t\in[0,1]$, there is no closed-form formula expressing 
\begin{enumerate}[\upshape (i)]
\item the unique minimizing geodesic $\gamma_{X,Y}(t)$ in terms of the entries of $X$, $Y$, and $t$;

\item the geodesic distance $d(X,Y)$ in terms of the entries of $X$ and $Y$;

\item  the Riemannian logarithm $\log_X Y$ in terms of the entries of $X$ and $Y$.
\end{enumerate}
More precisely, there exist a point $X\in\V(k,n)$ and an elementary real-analytic curve $Y(s)\in\V(k,n)$ defined near $s=0$ such
that if $\gamma_s:[0,1]\to\V(k,n)$ is the unique minimizing geodesic from $X$ to $Y(s)$, then the functions
\begin{equation}\label{eq:non-L}
    [\dot\gamma_s(0)]_{2,1}
=[\log_X Y(s)]_{2,1},\qquad
[\gamma_s(\tfrac12)]_{3,1},\qquad
d\bigl(X,Y(s)\bigr)
\end{equation}
are all non-Liouvillian over $\C(s)$.
Here $[Z]_{i,j}$ denotes the $(i,j)$th entry
of the matrix $Z$.
\end{theorem}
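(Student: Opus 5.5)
Our plan is to reduce the boundary value problem to inverting the exponential map along a carefully chosen low-dimensional family, and to show that this inversion forces a non-Liouvillian function. We take $X=\begin{bmatrix} I_k\\ 0\end{bmatrix}$. By \eqref{eq:geodesic}, a tangent vector $H=XA+X_\perp B$ with $A$ skew produces the endpoint $\gamma(1)=\exp\begin{bmatrix} A & -B^\tp\\ B & 0\end{bmatrix}X$. The hypotheses $k\ge2$ and $n-k\ge2$ let us embed a fixed small block into $\V(k,n)$, namely the first two columns together with two rows of the complement. Every other entry is taken to be trivial. This reduces the problem to $\V(2,4)$ with a one-parameter subfamily of tangent vectors $H(s)$ depending on a single unknown scalar $\theta$.

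We choose this subfamily so that the $4\times4$ skew matrix $\begin{bmatrix} A&-B^\tp\\ B&0\end{bmatrix}$ has eigenvalues $\pm i\alpha$ and $\pm i\beta$ with $\alpha\ne\beta$, and such that it does not commute with its flat part. For example, take $A=\theta\begin{bmatrix}0&-1\\1&0\end{bmatrix}$ and $B=\begin{bmatrix}c&0\\0&0\end{bmatrix}$ with a fixed $c$. Then the entries of $\exp(\cdot)X$ are explicit combinations of $\cos\alpha,\sin\alpha,\cos\beta,\sin\beta$ with $\alpha,\beta$ algebraic in $\theta$. We then define the elementary curve $Y(s)$ by choosing $s$ so that one endpoint coordinate equals $s$. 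For instance, $s=[\gamma(1)]_{1,1}=F(\theta)$, and the remaining entries of $Y(s)$ are elementary functions of $\theta$. This only yields a curve $Y(s)$ that is elementary in $s$ if we invert $F$, so we modify the setup. We parametrize the endpoint elementarily by $s$, for instance $Y(s)=\exp(sW)Y_0$ for a fixed skew $W$ and a point $Y_0$ with a known logarithm. We then apply the implicit function theorem near $s=0$, which is justified because $Y_0$ lies inside the injectivity radius. This gives a real-analytic $\theta(s)$, and hence the uniqueness of the minimizing geodesic. The quantities in \eqref{eq:non-L} are then analytic functions of $\theta(s)$. For example, $[\dot\gamma_s(0)]_{2,1}=\theta(s)$, the entry $[\gamma_s(\tfrac12)]_{3,1}$ is an explicit trigonometric-algebraic function of $\theta(s)$, and $d$ is $\sqrt{g(H,H)}$, which is algebraic in $\theta(s)$.

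The core step is to show that $\theta(s)$ is non-Liouvillian over $\C(s)$. The relation defining $\theta(s)$ has the form $G(\theta,s)=0$, and it mixes $\theta$ with $\cos\sqrt{\theta^2+c^2}$-type terms of incommensurate frequencies. This is a transcendental equation of Kepler type. We will use the standard differential-Galois criterion: a Liouvillian function satisfies an algebraic relation over a Liouvillian tower. Our approach is to assume $\theta$ is Liouvillian and apply Rosenlicht/Risch-style structure theorems, or equivalently Ax's theorem (Schanuel for function fields). The relation $G=0$, together with the exponentials $e^{i\alpha(\theta)}$ and $e^{i\beta(\theta)}$, would then give an algebraic dependence among $\theta$, $s$, and these exponentials. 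We show this dependence contradicts Ax's theorem, given that $\alpha$ and $\beta$ are $\mathbb{Q}$-linearly independent modulo constants. We then propagate the result to the other two quantities. Each is related to $\theta$ by an elementary or algebraic relation that can be inverted in the Liouvillian class: $\theta$ is algebraic over the field generated by $d$, and similarly over the field generated by the $(3,1)$ entry together with elementary functions. Since Liouvillian extensions are closed under such operations, non-Liouvillianity transfers to them.

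We expect the main obstacle to be the non-Liouvillian step. Ax's theorem controls elementary closures, but Liouvillian closures also allow integrals, so we need a genuinely differential-Galois argument. The first thing we would try is to differentiate $G(\theta(s),s)=0$ to obtain a first-order nonlinear ODE for $\theta$. We would then linearize it, or find a second-order linear ODE satisfied by a suitable function of $\theta$, and show that its differential Galois group has a non-solvable identity component, for example via Kovacic's algorithm. If we reach a linear equation whose Galois group is $\mathrm{SL}_2$, this settles the result.
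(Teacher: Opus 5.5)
\textbf{The construction step fails.} With $c$ fixed, the family $A=\theta J$, $B=\mathrm{diag}(c,0)$ is one-dimensional, so its Riemannian exponentials trace a single curve in the five-dimensional manifold $\V(2,4)$. A generic elementary curve $Y(s)=\exp(sW)Y_0$ leaves that curve immediately. Local invertibility of the Riemannian exponential then returns $\log_X Y(s)$ as an analytic curve in the full five-dimensional tangent space, not as a scalar $\theta(s)$ with $c$ frozen. Hence the relation $G(\theta,s)=0$ and the identities $[\dot\gamma_s(0)]_{2,1}=\theta(s)$ and $d=\sqrt{\theta^2+c^2}$ have no basis. Your example is also degenerate: $\begin{bsmallmatrix}A&-B^\tp\\B&0\end{bsmallmatrix}$ has rank $2$, with eigenvalues $\pm i\sqrt{\theta^2+c^2},0,0$, so there is a single frequency rather than two incommensurate ones.

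The heart of the problem is producing an endpoint curve that is elementary in $s$ while its logarithm is governed by one explicit scalar transcendental equation. You flagged this but did not solve it. The paper does it with an explicit ansatz in which all $2\times 2$ blocks of $X$, $Y(s)$ and $H(s)$ commute with $J$, so everything reduces to complex scalars. Concretely:
\begin{itemize}
\item $Y(s)$ has first two columns $\cos\varepsilon\,e^{-sJ}$ over $-\sin\varepsilon\,e^{-sJ}$;
\item $H(s)$ has $(1,1)$-block $2ig(x)\log f(x)\,J$ and $(2,1)$-block proportional to $e^{i(\log x)J}$;
\item $x=x(s)$ solves $is=\log x+g(x)\log f(x)$, with $f,g$ explicit algebraic functions.
\end{itemize}
One then checks $\gamma_s(1)=Y(s)$ directly from \eqref{eq:geodesic}.

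\textbf{The non-Liouvillian step is only a plan.} Even granting such a relation, as you note, Ax's theorem does not control integrals. The Kovacic route presupposes a linear ODE whose Picard--Vessiot extension is generated by $\theta$, and you do not exhibit one. The paper instead proves directly (Lemma~\ref{lem:log}) that the inverse of $t=\log x+g(x)\log f(x)$ is not Liouvillian. It descends a hypothetical Liouvillian tower using Rosenlicht's theorem on transcendental primitive and exponential steps, until $\log x$ would have to be algebraic over $\C(x)$, which is impossible.

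\textbf{The transfer to the other quantities also fails as stated.} If $[\gamma_s(\tfrac12)]_{3,1}$ is a transcendental function of $\theta$, then Liouvillianity of this entry does not imply Liouvillianity of $\theta$. In your family it is $c\sin(\alpha/2)/\alpha$ with $\alpha=\sqrt{\theta^2+c^2}$. The paper arranges instead that $x(s)$ satisfies a quadratic equation with coefficients in $\C\bigl(e^{is},[\gamma_s(\tfrac12)]_{3,1}\bigr)$. It handles the distance via the Killing field $Z\mapsto ZS$ and the first variation formula, which give $\frac{d}{ds}d(X,Y(s))^2=-2[\dot\gamma_s(0)]_{2,1}$.
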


Note that there are three exceptional cases for which Problem~\ref{prob:XY} does have closed-form solutions: $\V(1,n) =\Sph^{n-1}$, $\V(n-1,n) \cong \SO(n)$, and $\V(n,n) = \O(n)$; for easy reference, we will provide these solutions in Section~\ref{sec:case}. So the bounds   $n \ge 4$ and $2 \le k \le n -2$ in Theorem~\ref{thm:imposs} are sharp. Aside from $(k,n) =(1,n)$, $(n-1,n)$, $(n,n)$, Problem~\ref{prob:XY} has no closed-form solutions. In Section~\ref{sec:mod}, we show that Theorem~\ref{thm:imposs} extends to all common, and even some uncommon, models of the Stiefel manifold, noting that \eqref{eq:stief} is just one among many other possible models.\footnote{See Section~\ref{sec:mod} for a discussion on models of abstract manifolds.}

For contrast, explicit closed-form formulas for the Grassmannian analogs of both Problems~\ref{prob:XH} and \ref{prob:XY} are known for all $k \le n$, and in several different models of $\Gr(k,n)$. In particular, closed-form formulas for the geodesic between two points on the Grassmannian first appeared in \cite{YL2016} for the quotient model $\Gr(k,n) \cong \V(k,n)/\O(k)$,  in \cite{projection} for the projection model   $\Gr(k,n) \cong \{ X \in \R^{n \times n} : X^\tp = X = X^2, \, \tr(X) = k\}$, and  in  \cite{LLY2020} for the involution model $\Gr(k,n) \cong \{ X \in \O(n) : X^\tp = X, \,\tr(X) = 2k - n\}$.

\section{Proof of Theorem~\ref{thm:imposs}}

We begin with the following general result on constructing non-Liouvillian functions from algebraic functions. The proof assumes basic knowledge of field theory \cite{GTM167}.

\begin{lemma}\label{lem:log}
Let $f$ and $g$ be nonconstant algebraic functions over $\C(x)$. Let $x_0\neq 0$ be a point at which
analytic branches of $f$ and $g$ are defined, with $f(x_0)\neq 0$. Choose an analytic branch of $\log x$ and an analytic branch of $\log f(x)$ on a sufficiently small neighborhood of $x_0$. If 
\[
t(x) \coloneqq \log x + g(x) \log f(x)
\]    
satisfies $t'(x_0) \neq 0$, then the local inverse $x=x(t)$ defined near $t_0 = t(x_0)$ is not Liouvillian over $\C(t)$.
\end{lemma}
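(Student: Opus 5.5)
The plan is to argue by contradiction inside a differential field equipped with $d/dt$, and to reduce the claim to a transcendence-degree count that Rosenlicht's theorem on Liouvillian solutions of logarithmic equations (the same mechanism used for the Lambert $W$ function) rules out. Suppose $x(t)$ is Liouvillian over $\C(t)$, lying in a Liouvillian extension $M\supseteq\C(t)$ of meromorphic germs at $t_0$ with constant field $\C$.

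\textbf{Step 1: enlarge to a field containing the logarithms.} Since $f,g$ are algebraic over $\C(x)$ and $x\in M$, their branches are algebraic over $M$. The germs $u=\log x$ and $v=\log f(x)$ satisfy $u'=x'/x$ and $v'=f'/f$ (with $'=d/dt$). Both are antiderivatives of elements of $M(f,g)$, so adjoining them keeps us Liouvillian. Thus
\[
L=\C(t)(x,f,g,u,v)
\]
is Liouvillian over $\C(t)$. The defining relation
\[
t=u+gv,\qquad 1=u'+g'v+gv',
\]
holds in $L$.

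\textbf{Step 2: transcendence bookkeeping.} Note that $t'(x_0)\ne0$ forces $x$ to be nonconstant, so $x$ is transcendental over $\C$. By Ostrowski--Kolchin, $u$ and $v$ are algebraically independent over $\C(x,f,g)$ unless some $c_1\,dx/x+c_2\,df/f$ is the derivative of an algebraic function, i.e.\ unless $x^{c_1}f^{c_2}$ is algebraic with rational exponents.

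I would split into two cases. In the generic case, $\operatorname{trdeg}_{\C}L=3$, generated by $x,u,v$. In the degenerate case $f^{q}=c\,x^{p}$, we get $v=\tfrac pq u+\text{const}$, so $t=h(x)\log x+\text{const}$ with $h=1+\tfrac pq g$. Here $h$ is nonconstant because $g$ is. In both cases the relation $t=u+gv$ exhibits $t$ as a genuinely "mixed" combination of logarithms with nonconstant algebraic coefficients.

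\textbf{Step 3 (main obstacle): apply Rosenlicht's theorem.} Rosenlicht's theorem says the following. In a Liouvillian extension of $K=\C(t)$ with no new constants, if $u_i$ satisfy $u_i'=z_i'/z_i$, then any algebraic relation forced over $K$ must be of the "linear with constant coefficients" type. I will view $\C(t)\subset\C(t,x)$ and count. Because $t$ is a rational function of $x,f,g,u,v$, we have $\operatorname{trdeg}_{\C(t)}\C(t,x,u,v)=\operatorname{trdeg}_{\C}L-1$. The theorem then forces a relation $\sum c_iu_i\in\C(t,x,f,g)$ with constant $c_i$, compatible with $t=u+gv$. Comparing coefficients against the independence established in Step 2 forces $g$ to be constant in the generic case, and $h$ to be constant in the degenerate case. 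This contradicts nonconstancy.

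The delicate points are the following. First, I must verify Rosenlicht's hypotheses: no new constants, which I get by working with germs. Second, I must handle the fact that the coefficient $g$ is not in the base field $\C(t)$. I would try first to pass to the base $K=\C(t,x,f,g)$, which is algebraic over $\C(t,u,v)$, and apply the logarithmic case of the theorem there. If that fails, the fallback is to compute the differential Galois group of the equation satisfied by $x(t)$. Concretely, $dx/dt=1/(1/x+g_xv+gf_x/f)$ with $v$ adjoined, and I would show that this group is not solvable. This computation would be the hard part.
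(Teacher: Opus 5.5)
Your Steps 1 and 2 are sound, but the gap is Step 3, which carries the entire content of the lemma and is not actually argued. The principle you attribute to Rosenlicht is not a theorem. In the form you use it, namely that a transcendence degree over $\C(t)$ equal to the number of logarithms forces a nontrivial constant-coefficient relation $\sum c_iu_i\in\C(t,z_1,\dots)$, it is false. Take $u_1=\log t$, $u_2=\log(t+1)$, $z_1=t$, $z_2=t+1$. These are Liouvillian over $\C(t)$ with no new constants and satisfy $u_i'=z_i'/z_i$. They have exactly your profile (transcendence degree $2$ over $\C(t)$), yet no nontrivial $c_1u_1+c_2u_2$ lies in $\C(t,z_1,z_2)=\C(t)$. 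So bookkeeping over a fixed base cannot produce the contradiction. It has to come from the specific relation $t=u+gv$ with $g$ nonconstant, and you leave open precisely how to exploit it when $g$ is not in the base. The fallback does not work as stated either: $dx/dt=1/(1/x+g_xv+gf_x/f)$ is nonlinear and coupled to $v$, so Picard--Vessiot theory and the solvable-group criterion do not apply to it.

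The missing idea, which is how the paper proceeds, is to use $t=u+gv$ one Liouvillian step at a time.

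\begin{itemize}
\item Put $x$ in a tower $\C(t)=K_0\subseteq\cdots\subseteq K_m$ containing $\mathcal S=\{x,f,g,u,v\}$. Show by descending induction that $\mathcal S$ is algebraic over every $K_i$.
\item At a transcendental step $K_i=K_{i-1}(\theta)$, let $F$ be the algebraic closure of $K_{i-1}$ in $E=K_{i-1}(\theta,\mathcal S)$. This is finite over $K_{i-1}$, because $\theta$ is transcendental and $E$ is finite over $K_{i-1}(\theta)$.
\item Apply Rosenlicht's structure theorem for a single Liouvillian step to $u'=x'/x$ and $v'=f'/f$.
\item If $\theta'\in F$, then $x,f\in F$, hence $g\in F$, and $u=a\theta+b$, $v=c\theta+d$ with $a,c\in\C$, $b,d\in F$. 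Substituting into $t=u+gv$ gives $(a+cg)\theta\in F$. Hence $a+cg=0$, and since $g$ is nonconstant, $a=c=0$.
\item If $\theta'/\theta\in F$, then $u,v\in F$, so $g=(t-u)/v\in F$. Then $x$ and $f$, being algebraic over $\C(g)$, lie in $F$.
\end{itemize}

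Either way $\mathcal S\subseteq F$. This is exactly how the issue of $g$ lying outside the base is resolved: $g$ is shown to lie in the current base at every stage. At $K_0$ every element of $\mathcal S$ is algebraic over $\C(t)$. That contradicts your own Step 2 count, which gives transcendence degree of $L$ over $\C(t)$ at least $1$; equivalently, it contradicts the transcendence of $\log x$ over $\C(x)$.
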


\begin{proof}
Since $t'(x_0) \neq 0$, $t$ has a local analytic inverse $x=x(t)$ near $t_0=t(x_0)$ by the inverse function theorem. Throughout the proof, prime always denotes differentiation with respect to $t$. Thus
\begin{equation}\label{eq:inverse}
    t=\log x(t)+g\bigl(x(t)\bigr)\log f\bigl(x(t)\bigr).
\end{equation}

Suppose for contradiction that $x(t)$ is Liouvillian over $\C(t)$. Since $f(x)$ and $g(x)$ are algebraic over $\C(x)$, and logarithms can be adjoined in a Liouvillian extension \cite[Definition~1.42]{vS2003}, there exists a finite tower of fields
\[
\C(t)=K_0\subseteq K_1\subseteq\cdots\subseteq K_m 
\]
such that $\mathcal{S}\coloneqq \{x(t), f\bigl(x(t)\bigr), g\bigl(x(t)\bigr), \log x(t), \log f\bigl(x(t)\bigr)\} \subseteq K_m$, $K_i = K_{i-1}(\theta_i)$ such that $\theta_i$ is either algebraic over $K_{i-1}$ or $\theta_i' \in K_{i-1}$ or $\theta_i\neq 0$ and $\theta_i'/\theta_i \in K_{i-1}$, and $K_i$ has constant field $\C$ for $i = 0,\dots, m$.

We prove by descending induction on $i$ that every element of $\mathcal S$ is algebraic over $K_i$. The base case $i=m$ trivially holds since $\mathcal S\subseteq K_m$. Fix $1\le i\le m$ and assume that every
element of $\mathcal S$ is algebraic over $K_i$. If $K_i/K_{i-1}$ is algebraic, the claim follows from transitivity of algebraicity. Otherwise, $\theta_i$ is transcendental over $K_{i-1}$ and either $\theta_i' \in K_{i-1}$ or $\theta_i'/\theta_i \in K_{i-1}$. Let
\[
E=K_{i-1}\bigl(\theta_i,x,f(x),g(x), \log x,\log f(x)\bigr).
\]
Since $K_i=K_{i-1}(\theta_i)$ and every element of $\mathcal S$ is algebraic over $K_i$ by the induction hypothesis, $E$ is a finite algebraic extension of $K_{i-1}(\theta_i)$.

Let $F$ be the subfield of $E$ consisting of all elements algebraic over $K_{i-1}$. For every finite
subextension $\widetilde F/K_{i-1}$ contained in $F$, transcendence of $\theta_i$ gives
\[
[\widetilde F:K_{i-1}]
=
[\widetilde F(\theta_i):K_{i-1}(\theta_i)]
\le [E:K_{i-1}(\theta_i)],
\]
noting that here $[E:F]$ denotes the \emph{degree}  of $E$ over $F$, i.e., the dimension of $E$ as an $F$-vector space \cite[Example~20.11 and Proposition~1.20]{GTM167}.
The degrees of these finite subextensions are bounded. Choose $\widetilde F$ of maximal degree. For any $a\in F$, the extension $\widetilde F(a)/K_{i-1}$ is finite, so maximality forces $a\in\widetilde F$. Therefore, $F=\widetilde F$ and $F/K_{i-1}$ is finite.

Since algebraic extensions of differential fields in characteristic zero are stable under differentiation,
$F$ is a differential field. By construction, $F$ is algebraically closed in $E$, $E$ is finite algebraic over $F(\theta_i)$, $\theta_i$ is transcendental over $F$, and $F$ and $E$ have the same field of constants $\C$. We may therefore apply \cite[Theorem, p.~18]{Rosenlicht1969} to the
identities
\[
\frac{x'}{x}-(\log x)'=0,
\qquad
\frac{f'(x)}{f(x)}-\bigl(\log f(x)\bigr)'=0.
\]
We proceed by dividing into the following two cases.

\medskip
\noindent\myuline{Case 1: $\theta_i'\in F$.} By \cite[Theorem, p.~18]{Rosenlicht1969}, $x,f(x)\in F$, 
and
\begin{equation}\label{eq:case1}
    \log x=a\theta_i+b,
\qquad
\log f(x)=c\theta_i+d,
\end{equation}
for some $a,c\in\C$ and $b,d\in F$. Since $x\in F$ and $g(x)$ is algebraic over $\C(x)$, $g(x)\in F$ as $F$ is algebraically closed in $E$. Substituting \eqref{eq:case1} into \eqref{eq:inverse} gives
\[
\bigl(a+cg(x)\bigr)\theta_i=t-b-dg(x)\in F.
\]
Since $\theta_i$ is transcendental over $F$, $a+cg(x)=0$. As $g(x)$ is nonconstant and $a,c\in\C$, $a=c=0$. Therefore, $\log x,\log f(x)\in F$ and hence $\mathcal S\subseteq F$.

\medskip
\noindent\myuline{Case 2: $\theta_i'/\theta_i\in F$.}
By \cite[Theorem, p.~18]{Rosenlicht1969}, $\log x,\log f(x)\in F$. Since $f$ is nonconstant, $\log f(x)$ is not identically zero. Therefore, 
\[
g(x)=\frac{t-\log x}{\log f(x)}\in F.
\]
Since $g$ is a nonconstant algebraic function of $x$, $x$ and $g(x)$ are algebraically dependent over $\C$, and $x$ is algebraic over $\C\bigl(g(x)\bigr)$. Hence, $x$ is algebraic over $F$ and $x \in F$ as $F$ is algebraically closed in $E$. Similarly, since $f(x)$ is algebraic over $\C(x)$, we obtain $f(x)\in F$ and hence $\mathcal S\subseteq F$.

In both cases, $\mathcal S\subseteq F$. Since $F$ is algebraic over $K_{i-1}$, every element of $\mathcal S$ is algebraic over $K_{i-1}$, completing the induction. Descending from $i=m$ to $i=0$, we obtain that every element of $\mathcal{S}$ algebraic over $\C(t)$.

Since $x(t)$ is nonconstant and algebraic over $\C(t)$, the variable $t$ is algebraic over $\C(x)$ and hence $\log x$ is algebraic over $\C(x)$. 
Let
\[
p(x,y)=y^r+a_{r-1}(x)y^{r-1}
+\cdots+a_0(x)
\]
be the monic minimal polynomial of $\log x$ over $\C(x)$, where $a_j\in\C(x)$. Using $(\log x)'=x'/x$ and differentiating $p\bigl(x(t),\log x(t)\bigr)=0$ with respect to $t$, we obtain
\[
0=x'\biggl(
\frac{\partial p}{\partial x}(x,\log x)
+\frac1x\frac{\partial p}{\partial y}(x,\log x)
\biggr).
\]
Since $x'$ is not identically zero,
\[
\frac{\partial p}{\partial x}(x,\log x)
+\frac1x\frac{\partial p}{\partial y}(x,\log x)
=0.
\]
Notice that the polynomial
\[
\frac{\partial p}{\partial x}(x,y)
+\frac1x\frac{\partial p}{\partial y}(x,y)
\]
has degree at most $r-1$ in $y$ and vanishes at $y=\log x$. By the minimality of $p$, it must therefore vanish identically. Comparing the coefficients of $y^{r-1}$ gives
\[
\frac{d a_{r-1}}{dx}=-\frac{r}{x}.
\]
Since the derivative of a rational function cannot have a simple pole, we arrive at a contradiction.
\end{proof}

We are now ready to prove Theorem~\ref{thm:imposs}. The proof strategy is to construct a point $X$ and an elementary real-analytic curve of endpoints $Y(s)$ for which the Riemannian logarithm, the midpoint of the minimizing geodesic, and the geodesic distance from $X$ to $Y$ are not Liouvillian over $\C(s)$, the field of complex rational functions in the variable $s$. 

\begin{proof}[Proof of Theorem~\ref{thm:imposs}]
Let $\varepsilon>0$ be sufficiently small, $J\coloneqq \begin{bsmallmatrix}0&-1\\1&0\end{bsmallmatrix}$, and 
\begin{equation}\label{eq:XY}
    X \coloneqq
\begin{bmatrix}
I_2&0\\
0&0\\
0&I_{k-2}\\
0&0
\end{bmatrix},
\qquad
Y(s) \coloneqq
\begin{bmatrix}
\cos\varepsilon\,e^{-sJ}&0\\
-\sin\varepsilon\,e^{-sJ}&0\\
0&I_{k-2}\\
0&0
\end{bmatrix},
\end{equation}
where the blocks are of row sizes $2$, $2$, $k-2$, and $n-k-2$, respectively. By construction, $X,Y(s)\in\V(k,n)$, and every entry of $Y(s)$ is an elementary function in $s$.

Using implicit function theorem, let $f$ be the algebraic function defined near $x=1$ by $f(x)+f(x)^{-1}
=(x+x^{-1})\cos\varepsilon$ and $f(1)=e^{i\varepsilon}$ and let
\[
g(x)=
\frac{x-x^{-1}}{f(x)-f(x)^{-1}} \cos\varepsilon.
\]
Clearly, both $f$ and $g$ are nonconstant algebraic functions, analytic near $x=1$, with $f(1)\neq0$, $g(1)=0$, and $g'(1)=-i\cot\varepsilon\neq0$. Choose the logarithm branches satisfying
$\log1=0$ and $\log f(1)=i\varepsilon$.
Then
\[
\frac{d}{dx}
\bigl(\log x+g(x)\log f(x)\bigr)
\bigg\vert_{x=1}
=1+\varepsilon\cot\varepsilon\neq0.
\]
Therefore, by implicit function theorem, 
\begin{equation}\label{eq:scalar}
is=\log x+g(x)\log f(x)
\end{equation}
uniquely determines an analytic function $x=x(s)$ near $s=0$, with $x(0)=1$. By Lemma~\ref{lem:log}, $x(s)$ is not Liouvillian over $\C(s)$.

By construction, $f(1/\overline{x})=1/\overline{f(x)}$ and $g(1/\overline{x})=\overline{g(x)}$. With the chosen branches of the logarithmic function, $\log(1/\overline{x})=-\overline{\log x}$ and $\log f(1/\overline{x}) =-\overline{\log f(x)}$. Consequently, by \eqref{eq:scalar},
\[
\log(1/\overline{x(s)})
+g(1/\overline{x(s)})
\log f(1/\overline{x(s)})=is.
\]
By the local uniqueness of the solution to \eqref{eq:scalar}, we obtain that $x(s)=1/\overline{x(s)}$, so $\lvert x(s) \rvert = \lvert f\bigl(x(s) \bigr) \rvert =1$ and  $g\bigl(x(s)\bigr)\in\R$.

With $x=x(s)$, define
\[
H(s)=
\begin{bmatrix}
2i\,g(x)\log f(x)\,J&0\\[1ex]
-\dfrac{
2\sin\varepsilon\,\log f(x)}
{f(x)-f(x)^{-1}}
e^{i(\log x)J}&0\\[2ex]
0&0\\
0&0
\end{bmatrix} \in \R^{n \times k}.
\]
Since $X^\tp H(s) = - H(s)^\tp X$, $H(s)\in\T_X\V(k,n)$ \cite[Section~2.2.1]{EAS1999}.

We now verify that the geodesic $\gamma_s(t)$ with initial point $X$ along $H(s)$ reaches
$Y(s)$ at $t=1$. By definition of $f$, we have
\[
\bigl(f(x)-f(x)^{-1}\bigr)^2
=\cos^2\varepsilon(x-x^{-1})^2
-4\sin^2\varepsilon.
\]
Combined with $J^2=-I_2$, plugging it into \eqref{eq:geodesic} yields
\begin{equation}\label{eq:geo-expand}
    \gamma_s(t)=
\begin{bmatrix}
\exp\bigl(it g(x)\log f(x)J \bigr)
\biggl(
\dfrac{f(x)^t+f(x)^{-t}}2 I_2
+\dfrac{i g(x)(f(x)^t-f(x)^{-t})}2 J
\biggr)&0\\[2ex]
-\biggl(
\dfrac{f(x)^t-f(x)^{-t}}
{f(x)-f(x)^{-1}} \biggr)
\exp\bigl(i\bigl(\log x+t g(x)\log f(x)\bigr)J \bigr) \sin\varepsilon &0\\[2ex]
0&I_{k-2}\\
0&0
\end{bmatrix},
\end{equation}
where $f(x)^t=\exp\bigl(t\log f(x)\bigr)$. Using the definitions of $f$ and $g$ and \eqref{eq:scalar} $(1,1)$-block and $(2,1)$-block reduce to $(\cos\varepsilon) e^{-sJ}$ and $-(\sin \varepsilon) e^{-sJ}$, respectively, at $t=1$. Therefore, $\gamma_s(1)=Y(s)$ and $H(s) = \log_X Y(s)$.

At $s=0$, we have
\[
H(0)=
\begin{bmatrix}
0&0\\
-\varepsilon I_2&0\\
0&0\\
0&0
\end{bmatrix}.
\]
Choosing $\varepsilon$ sufficiently small and restricting $s$ to a neighborhood of zero, $H(s)$ lies in a normal neighborhood of the Riemannian exponential at $X$. Therefore, $\gamma_s$ is the unique minimizing geodesic joining $X$ and $Y(s)$.

Finally, by the definition of $H(s)$ and \eqref{eq:scalar},
\[
[\dot\gamma_s(0)]_{2,1}
=2i\,g(x)\log f(x)
=2i(is-\log x).
\]
If it is Liouvillian over $\C(s)$, then $\log x(s)$, and consequently, $x(s)=\exp\bigl(\log x(s)\bigr)$, are Liouvillian, contradicting Lemma~\ref{lem:log}.

We next show that the geodesic itself has a non-Liouvillian entry at $t=\tfrac12$. Consider $[\gamma_s(\tfrac12)]_{3,1}$ as a function of $s$. By \eqref{eq:geo-expand} and \eqref{eq:scalar},
\[
[\gamma_s(\tfrac12)]_{3,1}=
-\frac{\sin\varepsilon}
{f(x)^{1/2}+f(x)^{-1/2}}
\cosh\left(\frac{\log x+is}{2}\right).
\]
Using the defining equation for $f$, we obtain
\[
[\gamma_s(\tfrac12)]_{3,1}^2=
\frac{\sin^2\varepsilon
(2+e^{is}x+e^{-is}x^{-1})}
{4\bigl(2+\cos\varepsilon(x+x^{-1})\bigr)}.
\]
Consequently, $x=x(s)$ satisfies
\[
0=
\bigl(4(\cos\varepsilon)[\gamma_s(\tfrac12)]_{3,1}^2
-(\sin^2\varepsilon)e^{is}\bigr)x^2
+\bigl(8[\gamma_s(\tfrac12)]_{3,1}^2-2\sin^2\varepsilon\bigr)x
+4(\cos\varepsilon)[\gamma_s(\tfrac12)]_{3,1}^2
-(\sin^2\varepsilon)e^{-is}.
\]
Since $[\gamma_0(\tfrac12)]_{3,1}=-\sin(\varepsilon/2)$, the coefficient of $x^2$ at $s=0$ is $-(1-\cos\varepsilon)^2\neq0$. If $[\gamma_s(\tfrac12)]_{3,1}$ were Liouvillian over $\C(s)$, then $x(s)$ would be algebraic over a Liouvillian extension of $\C(s)$, and hence Liouvillian,
contradicting Lemma~\ref{lem:log}.

Lastly, let
\[
S\coloneqq
\begin{bmatrix}
-J&0\\
0&0
\end{bmatrix}\in\R^{k\times k}.
\]
By construction, $Y'(s)=Y(s)S$. Since right multiplication by orthogonal matrices is an isometry with respect to \eqref{eq:metric}, $Z\mapsto ZS$ defines a Killing vector field. By the first variation formula and the fact that the inner product of a Killing field with the geodesic velocity is conserved,
\[
\frac{d}{ds}d\bigl(X,Y(s)\bigr)^2
=2g_{Y(s)}\bigl(\dot\gamma_s(1),Y(s)S\bigr)
=2g_X(H(s),XS)
=-2[\dot\gamma_s(0)]_{2,1},
\]
which is not Liouvillian over $\C(s)$ by the previous argument. Since the derivative of a
Liouvillian function is Liouvillian, $d\bigl(X,Y(s)\bigr)^2$ is not Liouvillian over $\C(s)$, so neither is $d\bigl(X,Y(s)\bigr)$.

Suppose, for contradiction, that at least one of $\log_X Y$, $d(X,Y)$, or $\gamma_{X,Y}(\tfrac12)$ admits an elementary formula in the entries of $X$ and $Y$, for all sufficiently close $X$ and $Y$. Since elementary functions are closed under composition, substituting \eqref{eq:XY} into this formula implies that at least one of \eqref{eq:non-L} is elementary, and hence Liouvillian, over $\C(s)$, a contradiction. 
\end{proof}

\section{Exceptional cases}\label{sec:case}

For easy reference, we show that geodesics in $\V(k,n)$ do indeed have closed-form expressions in the three exceptional cases $k = 1, n-1, n$ excluded from Theorem~\ref{thm:imposs}. 
For $k=1$, $\V(1,n)=\Sph^{n-1}$, the unit sphere. Given $u,v\in \Sph^{n-1}$ with $u\neq\pm v$, we have \cite[Proposition~5.27]{Lee2018}
\begin{align*}
\log_u v
&=\frac{\arccos(u^\tp v)}{\sqrt{1-(u^\tp v)^2}}
\bigl(v-(u^\tp v)u\bigr),
\qquad\qquad
d(u,v)=\arccos(u^\tp v),\\
\gamma(t)
&=\frac{\sin\bigl((1-t)\arccos(u^\tp v)\bigr)}{\sqrt{1-(u^\tp v)^2}}u
+\frac{\sin\bigl(t\arccos(u^\tp v)\bigr)}{\sqrt{1-(u^\tp v)^2}}v.
\end{align*}
For $k=n-1$, $\V(n-1,n)\cong\SO(n)$ \cite[Section~3.D]{Hatcher}, identifying \eqref{eq:metric} with the bi-invariant metric $h_X(U,V)=\frac12\tr(U^\tp V)$ on $\SO(n)$. For $X,Y\in\SO(n)$ such that $X^\tp Y$ has no
eigenvalue $-1$, we have \cite[Theorem~2.27]{AlexandrinoBettiol2015}
\begin{equation}\label{eq:so}
    \log_X Y =X\log(X^\tp Y), \qquad
    \gamma(t) =X\exp\bigl(t\log(X^\tp Y)\bigr), \qquad
    d(X,Y) =\frac{1}{\sqrt{2}}\lVert \log(X^\tp Y)\rVert_F,
\end{equation}
where $\log$ denotes the principal matrix logarithm. For $k=n$, $\V(n,n)=\O(n) = \SO(n) \cup \SO^-(n)$, and \eqref{eq:so} applies within each connected component.

\section{Other models of the Stiefel manifold}\label{sec:mod}

While the matrix representation of the Stiefel manifold in \eqref{eq:stief} as orthonormal $n \times k$ matrices is taken to be synonymous with the Stiefel manifold, this is strictly speaking a fallacy, although common practice in applied mathematics. The Stiefel manifold is an abstract topological manifold of orthonormal $k$-frames, which is  Stiefel's original definition \cite{Stiefel} as well as the standard definition in differential geometry, see \cite[Section~VII.2]{Kobayashi-Nomizu} or \cite[Section~2]{James1976} for example.

The Stiefel manifold has many different representations aside from \eqref{eq:stief}. For example, it has infinitely many isomorphic representations as a submanifold of $\mathbb{R}^{n \times k}$ \cite{model} and may also be represented as quotient manifolds $\O(n)/\O(n-k)$, $\SO(n)/\SO(n-k)$,  $\Spin(n)/\Spin(n-k)$, or a myriad of yet more esoteric possibilities. We will extend Theorem~\ref{thm:imposs} to other models to show that no closed-form formula exists whenever the model is isometric to \eqref{eq:stief} by a Nash diffeomorphism. Readers unfamiliar with Nash manifolds and Nash diffeomorphisms may consult \cite{Shiota1987}. Here it suffices to say that all aforementioned models of the Stiefel manifold are Nash manifolds --- see \cite{model} for the submanifold models and \cite{FangSun2016} for the quotient models.

\begin{proposition}\label{prop:models}
Let $n\ge4$, $2\le k\le n-2$, and $M$ be a Nash manifold with Riemannian metric $g'$. If there exists an isometry
\[
\varphi:\bigl(\V(k,n),g\bigr)\to(M,g')
\]
that is a Nash diffeomorphism, then there exist $x\in M$ and an elementary real-analytic curve $\xi(s)\in M$ defined near zero such that for the Riemannian logarithm and the midpoint of the unique minimizing geodesic, least one of their coordinate functions in Nash coordinates is not Liouvillian, and the geodesic distance from $x$ to $\xi(s)$ are not Liouvillian over $\C(s)$.  
\end{proposition}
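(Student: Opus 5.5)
We transport the construction from the proof of Theorem~\ref{thm:imposs} through $\varphi$. Set $x\coloneqq\varphi(X)$ and $\xi(s)\coloneqq\varphi\bigl(Y(s)\bigr)$, with $X$ and $Y(s)$ as in \eqref{eq:XY}. Since $\varphi$ is an isometry, it sends minimizing geodesics to minimizing geodesics and preserves distances. Hence the unique minimizing geodesic from $x$ to $\xi(s)$ is $\varphi\circ\gamma_s$, its midpoint is $\varphi\bigl(\gamma_s(\tfrac12)\bigr)$, and
\[
d'\bigl(x,\xi(s)\bigr)=d\bigl(X,Y(s)\bigr),\qquad \log_x\xi(s)=d\varphi_X\bigl(H(s)\bigr).
\]
The distance statement is then immediate from Theorem~\ref{thm:imposs}, since the function of $s$ is literally the same.

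One point needs care: $\xi(s)$ must be elementary in Nash coordinates. A Nash map has components that are algebraic functions of the input coordinates. Composing them with the elementary entries of $Y(s)$ therefore gives functions of $s$ that are algebraic over an elementary field, and such functions are elementary. Real-analyticity of $\xi$ follows because Nash maps are real-analytic.

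For the logarithm and the midpoint we argue by contradiction, pulling back along $\varphi^{-1}$, which is also Nash. Suppose every Nash coordinate of $m(s)\coloneqq\varphi\bigl(\gamma_s(\tfrac12)\bigr)$ were Liouvillian over $\C(s)$. Then $\gamma_s(\tfrac12)=\varphi^{-1}\bigl(m(s)\bigr)$ has entries that are algebraic functions of Liouvillian functions. These are algebraic over a Liouvillian extension of $\C(s)$, hence Liouvillian, contradicting the non-Liouvillian entry $[\gamma_s(\tfrac12)]_{3,1}$.

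The logarithm works the same way. We have $H(s)=d(\varphi^{-1})_x\bigl(\log_x\xi(s)\bigr)$, and the Jacobian of $\varphi^{-1}$ at the fixed point $x$ is a constant matrix. So $[H(s)]_{2,1}$ is a $\C$-linear combination of the coordinates of $\log_x\xi(s)$. If all of those coordinates were Liouvillian, $[\log_XY(s)]_{2,1}$ would be Liouvillian too, contradicting Theorem~\ref{thm:imposs}.

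The step I expect to be the main obstacle is the bookkeeping about Nash coordinates. We need that a Nash chart near $x$ (or near the midpoint) can be taken so that the chart maps are Nash, i.e.\ semialgebraic and analytic, hence satisfying polynomial equations. Only then are they algebraic functions whose complexified branches fit the differential-field framework; this is where the assumption that $\varphi$ is Nash rather than merely smooth is used. To handle it, I would invoke the characterization that the components of a Nash function satisfy a nontrivial polynomial relation with their arguments \cite{Shiota1987}. I would then note that adjoining a root of such a polynomial is a permitted algebraic step in a Liouvillian tower, with the constant field $\C$ unchanged because it is algebraically closed.
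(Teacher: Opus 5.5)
Your proposal is correct and follows the same route as the paper's proof. You set $x=\varphi(X)$ and $\xi(s)=\varphi(Y(s))$, use the isometry to get $\varphi\circ\gamma_s$ as the minimizing geodesic, $\log_x\xi(s)=d\varphi_X(\log_XY(s))$ and equal distances, and then pull back through the constant linear map $d(\varphi^{-1})_x$ and the Nash map $\varphi^{-1}$ to contradict Theorem~\ref{thm:imposs}, making explicit a few algebraicity facts that the paper asserts in one line each.
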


\begin{proof}
Let $X$, $Y(s)$, and $\gamma_s$ be as defined in the proof of Theorem~\ref{thm:imposs}, and let $x\coloneqq\varphi(X)$, $\xi(s) \coloneqq \varphi\bigl(Y(s)\bigr)$, and $\eta_s(t)\coloneqq\varphi\bigl(\gamma_s(t)\bigr)$. Since $\varphi$ is Nash, $\xi(s)$ is elementary and real-analytic. Since $\varphi$ is an isometry, $\eta_s$ is the unique minimizing geodesic from $x$ to $\xi(s)$, and
\[
\log_x \xi(s)=d\varphi_X\bigl(\log_X Y(s)\bigr),
\qquad
d_{g'}\bigl(x,\xi(s)\bigr)=d\bigl(X,Y(s)\bigr).
\]

Since $d\varphi_X^{-1}$ is a linear map, if every coordinate function of $\log_x \xi(s)$ is Liouvillian over $\C(s)$, then so is every entry of $\log_X Y(s)$, a contradiction to Theorem~\ref{thm:imposs}. Similarly, if every coordinate function of $\eta_s(\tfrac12)$ is Liouvillian over $\C(s)$, then $\gamma_s(\tfrac12)=\varphi^{-1}\bigl(\eta_s(\tfrac12)\bigr)$ would be Liouvillian as $\varphi^{-1}$ is Nash, a contradiction to Theorem~\ref{thm:imposs}. Lastly, $d_{g'}\bigl(x,\xi(s)\bigr)=d\bigl(X,Y(s)\bigr)$, which is already known to be non-Liouvillian over $\C(s)$ by Theorem~\ref{thm:imposs}.
\end{proof}

\section{Conclusion}

A negative result like Theorem~\ref{thm:imposs} does not necessarily entail a dead end. For one, the knowledge that there is no general closed-form formula   ascertains the value of numerical approaches like those in  \cite{geo1,geo2,geo3,geo4,geo7}. We hope our work here would impel developments of even better numerical algorithms for the Riemannian logarithm, endpoint geodesic, and geodesic distance on the Stiefel manifold.

\subsection*{AI disclosure}

The following are due entirely to the authors without AI assistance: (i) the leap of faith that Problem~\ref{prob:XY} has no solution; (ii) the observation that matrix functions are elementary functions; (iii) the insight to use differential Galois theory; the twin strategy of (iv) narrowing down the infinite possibilities to $(k,n)  = (2,4)$ and then (v) extending it back to include all  $2 \le k \le n -2$ and $n \ge 4$ through an isometric embedding. ChatGPT-6 Astra had supplied one important step --- it was used to search for a $(k,n) = (2,4)$ counterexample after we put the scaffolding of the proof in place.

\subsection*{Acknowledgment}

The authors are supported by a Vannevar Bush Faculty Fellowship ONR N000142312863.

\end{document}